\newtheorem{theorem}{Theorem}[section]
\newtheorem{definition}[theorem]{Definition}
\theoremstyle{definition}
\def\cS{\mathcal S}
\tikzstyle{vertex}=[
\numberwithin{equation}{section}
\begin{document}

\title
[]
{Combinatorics of the Quantum Symmetric Simple Exclusion Process, associahedra and free cumulants}

\author{}
\address{}

\subjclass{Primary 05C05; Secondary 46L54 82C10 }
\keywords{Exclusion process, free cumulants, associahedra}

\author{Philippe  Biane}
\thanks{This research was supported by the project CARPLO, ANR-20-CE40-0007.}
\email  {biane@univ-mlv.fr }
\address{Institut Gaspard Monge
UMR CNRS - 8049
Universit\'e Gustave Eiffel
5 boulevard Descartes, 
77454 Champs-Sur-Marne
FRANCE}

\maketitle



\begin{abstract}
The Quantum Symmetric Simple Exclusion Process (QSSEP) is a model of quantum particles hopping on a finite interval and satisfying the exclusion principle. Recently Bernard and Jin have studied the fluctuations of the invariant measure for this process, when the number of sites goes to infinity. These fluctuations are encoded into polynomials, for which they have given equations and proved that these equations determine the polynomials completely. In this paper,  I give an explicit combinatorial formula for these polynomials, in terms of Schr\"oder trees. I also show that, quite surprisingly, these polynomials can be interpreted as free cumulants of a family of commuting random variables.
\end{abstract}

\maketitle

\section{Introduction}
The exclusion process is a model of particles hopping on a one-dimensional lattice, which has been extensively studied in statistical mechanics, probability, and combinatorics, see e.g. \cite{M} for an overview.
Recently, quantum versions of these processes have been introduced and the study of their fluctuations has been undertaken. In particular, Bernard and Jin \cite{BJ} have studied the invariant measure of this quantum process and shown that the asymptotic behaviour of the fluctuations of this measure, when the number of sites grows, is encoded into a family of polynomials (called ``loop polynomials'') $Q_\sigma(x_1,\ldots,x_n)$, indexed by    cyclic permutations $\sigma$  of $1,2,\ldots,n$. The first few values  of these polynomials, for permutations of small size, are given by
(we write a cyclic permutation of $1,\ldots,n$ as the sequence $1,\sigma(1),\sigma^2(1),\ldots$):
$$
\begin{array}{rcllll}
Q_1&=&x_1\\
\\
Q_{12}&=&x_1(1-x_2)\\
\\
Q_{123}&=&x_1(1-2x_2)(1-x_3)=\quad Q_{132}\\
\\
Q_{1234}&=&x_1(1-2x_3-3x_2+5x_2x_3)(1-x_4)\quad =Q_{1243}=Q_{1432}=Q_{1342}\\
\\
Q_{1324}&=&x_1(1-x_3-4x_2+5x_2x_3)(1-x_4)\quad =Q_{1423}
\end{array}
$$
They point out that the coefficients of these polynomials are integers and seem to be connected with a well known combinatorial and geometric object, the associahedron. 
In this paper we provide an explicit combinatorial expression for these polynomials. More precisely, we prove that these polynomials can be written as sums of monomials indexed by Schr\"oder trees, which are themselves in natural bijection with the faces of the associahedron. Moreover this combinatorial formula also shows that these polynomials can be interpreted as free cumulants of some family of commuting random variables. Free cumulants have been introduced in the theory of free probability by Roland Speicher, as an analogue of the cumulants appearing in probability theory. They are very efficient tools for making computations, with many applications to the theory of random matrices. See e.g. the references \cite{MS}, \cite{NS} for an overview of these fields. It is however quite surprising to see these quantities appearing in the analysis of the QSSEP.

This paper is organized as follows: in the next section I describe the physical model for the quantum symmetric simple exclusion process and how the loop polynomials arise from the analysis of the fluctuations of the two point correlation functions. In section 3, I recall the explicit characterization of the loop polynomials obtained in \cite{BJ} and  in section 4, I introduce some combinatorial notions needed to understand the explicit formula for the loop polynomials. These are   non-crossing partitions, free cumulants, Schr\"oder trees, the cluster complex and the associahedra. The connections between Schr\"oder trees and free cumulants have been explored in \cite{JMNT} (see also \cite{CN}, sect. 1.2) and we use some of their results. Section 5 is devoted to stating and proving  the combinatorial formula for  the loop polynomials. Finally, in section 6, I show that the loop polynomials can also be interpreted as  free cumulants of a family of commuting random variables.

I would like to thank Denis Bernard for pointing out this problem to me as well as for all  his  explanations, Matthieu Josuat-Verg\`es for directing me to the paper \cite{JMNT} and the referees for their comments and suggestions.
\section{The model}\label{model}
In this section I describe briefly the physical model at the origin of the construction of the loop polynomials. A more precise description can be found  in \cite{BJ}, to which I refer for further information.
My aim here is just to explain the physical context in which the loop polynomials appear,
readers interested only in the combinatorial aspect of the problem can jump directly to the next section, where I give the precise algebraic definition of the loop polynomials.

Let  $c_i^\dagger,c_i$, indexed by integers $i=0,1,\ldots, N-1$, be a family of fermionic creation and annihilation operators, satisfying the usual anticommutation relations
$$c_ic_j^\dagger+c_j^\dagger c_i=\delta_{ij}$$
These operators are defined on a Hilbert space $V$, of dimension $2^N$, and describe a system of fermionic particles situated at the points $0,1,\ldots, N-1$. The quantum symmetric exclusion process has a Hamiltonian $H_t$, which is a random selfadjoint operator on $V$,  depending on the time $t\geq 0$ and satisfying a stochastic differential equation
$$dH_t=\sum_{j=0}^{N-1}c_{j+1}^\dagger c_jdW_t^j+c_j^\dagger c_{j+1}d\bar W_t^j$$ where $W_t^j,j=0,\ldots N-1$ are a family of independent complex Brownian motions. The Hamiltonian describes the motions of the quantum particles which, due to Brownian noise, can jump from their site to one of the  nearest neighbouring sites. It is supplemented by boundary conditions at the points $0$ and $N-1$, where the system is in contact with a reservoir, with which it can exchange particles.
From the Hamiltonian one deduces the equation of motion for the density matrix $\rho_t$, a positive self-adjoint matrix on $V$, with $Tr(\rho_t)=1$:
$$d\rho_t=-i[dH_t,\rho_t]-\frac{1}{2}[dH_t,[dH_t,\rho_t]]+\mathcal L_{bdry}(\rho_t)dt$$
Here $\mathcal L_{bdry}$ is a linear operator of Linblad form describing the boundary conditions at the points  $0$ and $N-1$. As $t\to \infty$ the density matrix converges in distribution to a random  density matrix $\rho$ which represents the stationary distribution of the process.

The quantities of interest are the fermion two-point functions in the steady measure, namely
$$G_{ij}=Tr(c_j^\dagger c_i\rho),$$ which are random variables,  and their connected correlation functions (known as cumulants in mathematical language) defined, for families of integers $i_1,i_2,\ldots,i_p,j_1,j_2,\ldots j_p\in [0,N-1]$ by
$$E[G_{i_1j_1}G_{i_2j_2}\ldots G_{i_pj_p}]^c$$
In the limit of $N\to\infty$ the leading cumulants scale as $N^{-p+1}$ and only the ones for which $j_1,\ldots, j_p$ is a cyclic permutation of $i_1,\ldots,i_p$ have a nonzero limit. More precisely, if $i_1/N,i_2/N,\ldots ,i_p/N$ converge to real numbers  $u_1,u_2, \ldots , u_p\in [0,1]$ as $N\to\infty$, then 
$$E[G_{i_1i_p}G_{i_pi_{p-1}}\ldots G_{i_2i_1}]^c=\frac{1}{N^{p-1}}g_p(u_1,\ldots,u_p)+O(\frac{1}{N^{p}})$$
for some functions $g_p$. The $g_p$ turn out to be piecewise polynomial functions, being given by a certain polynomial in each sector corresponding to an ordering of the $u_i$. In order to study the  $g_p$ it is thus convenient to consider their restrictions to these different sectors. 
Specifically, introduce functions $Q_\sigma(x_1,\ldots,x_p)$ defined for $0\leq x_1\leq x_2\leq\ldots\leq x_p\leq 1$, indexed by circular permutations $\sigma $ of $1,\ldots, p$ which satisfy:

\begin{equation}\label{Q}
E[G_{i_1i_{\sigma^{p-1}(1)}}G_{i_{\sigma^{p-1}(1)}i_{\sigma^{p-2}(1)}}\ldots G_{i_{\sigma(1)}i_{1}}]^c=\frac{1}{N^{p-1}}Q_\sigma(x_1,\ldots,x_p)+O(\frac{1}{N^{p}}).
\end{equation}
Here we assume that the  limits of the $i_k/N$ exist as $N\to\infty$ and correspond, once ordered, to the points $x_1,\ldots,x_p$. The permutation $\sigma$ encodes the order in which the $x_i$ occur in the left-hand side of (\ref{Q}).
The $Q_\sigma$ are polynomials and they are determined by some  algebraic conditions which we describe in the next section. While the algebraic definition of the loop polynomials is unambiguous, the fact that they satisfy (\ref{Q}) is not proved in full generality and still partly conjectural. It would be nice to have a complete proof of this fact.
\section{Characterization of the loop polynomials}
In this section I give the characterization of the loop polynomials $Q_\sigma$ appearing in formula (\ref{Q}), following \cite{BJ}. Whereas their original definition is valid when the $x_i$ form a set of ordered real numbers in the interval $[0,1]$, the polynomials $Q_\sigma$ can be defined purely algebraically as follows.
\begin{definition}\label{loop}
 The loop polynomials form  a family of polynomials $Q_\sigma(x_1,\ldots,x_n)$, indexed by circular permutations of  $1,\ldots,n$, with the following properties.

\begin{enumerate}
\item
$Q_\sigma$ is of degree 1 in each of the variables $x_i$.
\item
If $n=1$ then $Q_{1}(x_1)=x_1$, if $n=2$ then $Q_{12}(x_1,x_2)=x_1(1-x_2)$.

\item
For $n\geq 2$ they satisfy the boundary conditions:
$$Q_\sigma=x_1P_\sigma(x_2,\ldots,x_{n-1})(1-x_n)$$ where $P_\sigma$ is a polynomial in $n-2$ variables.
\item
For $i=1,2,\ldots, n-1$ they satisfy a continuity condition:
$$Q_\sigma|_{x_i=x_{i+1}}=Q_{s_i\sigma s_i}|_{x_i=x_{i+1}}$$
\item For $i=1,2,\ldots, n-1$ they satisfy an exchange relation:
$$\left([x_i](Q_\sigma+Q_{s_i\sigma s_i})\right)|_{x_i=x_{i+1}}-\left([x_{i+1}](Q_\sigma+Q_{s_i\sigma s_i})\right)|_{x_i=x_{i+1}}=2([x_i]Q_{\sigma^-}(x^-))([x_{i+1}]Q_{\sigma^+}(x^+))$$
\end{enumerate}
Here $s_i$ is the transposition $(i\, i+1)$ and, if $P$ is a polynomial in several variables, $[x]P$ denotes the coefficient of the monomial $x$ in $P$. It is a polynomial in the remaining variables. Since $\sigma$ is a cycle, the permutation $s_i\sigma=\sigma^+\sigma^-$ is the product of two disjoint cycles, denoted by $\sigma^+$, which moves  $i+1$ and
$\sigma^-$, which moves $i$. The variables denoted by $x^-$ are those with the indices moved by $\sigma^-$ and similarly for $x^+$.
\end{definition}

Using the relations in the definition above, the authors prove in \cite{BJ}
 that the polynomials $Q_\sigma$ are uniquely defined, moreover they show that  the $Q_\sigma$, for $\sigma=(123\ldots n)$, can be computed by induction on $n$.

Let us explain in more details, following \cite{BJ}, how conditions (3), (4), (5)  can be used to compute the loop polynomials. Fix some integer $i\in [1,n-1]$. Since the polynomials have degree one in each variable, it is possible to expand $Q_\sigma$ as 
$$Q_\sigma=A+x_iB+x_{i+1}C+x_ix_{i+1}D,$$ where the polynomials $A,B,C,D$ do not depend on $x_i,x_{i+1}$. If we do the same for $Q_{s_i\sigma s_i}$, namely
$$Q_{s_i\sigma s_i}=A'+x_iB'+x_{i+1}C'+x_ix_{i+1}D',$$
then (4) implies that $$A=A',\quad D=D',\quad B+C=B'+C'.$$ Let us consider the factorization $s_i\sigma=\sigma^-\sigma^+$ and  the polynomial 
$$\Delta:=([x_i]Q_{\sigma^-}(x^-))([x_{i+1}]Q_{\sigma^+}(x^+)).$$
The continuity (4) and exchange (5) conditions   imply that 
\begin{equation}\label{delta}
B-C'=B'-C=\Delta.
\end{equation}

Since all cyclic permutations are conjugated these relations allow thus to compute $Q_\sigma$ for any cycle of length $n$, if we know the value at one particular cycle of length $n$ and the values for cycles of smaller length. Since the value of $Q_{123\ldots n}$ can be computed by induction on $n$, this shows that the polynomials $Q_\sigma$ are uniquely defined. It is not clear a priori, from these purely algebraic properties, that the polynomials exist, since there might be too many constraints on them, in particular it is not clear why the boundary conditions (3) will be fulfilled, but our explicit formula below will provide a direct proof that, indeed, polynomials satisfying all these properties exist.

As an example let $\sigma=(1234)$ and $i=2$ so that $s_i=s_2=(23)$ and $s_2(1234)s_2=(1324)$. One has
 $$Q_\sigma=x_1(1-3x_2-2x_3+5x_2x_3)(1-x_4)$$ so that
$A=x_1$, $B=-3x_1(1-x_4)$, $C=-2x_1(1-x_4)$, $D=5x_1(1-x_4)$, moreover  $$s_2\sigma=(134)$$ so that $\sigma^-=(2), x^-=\{x_2\}$ and $\sigma^+=(134), x^+=\{x_1,x_3,x_4\}$. Thus 
$$
\begin{array}{ll}
Q_{\sigma^-}=x_2, & [x_2]Q_{\sigma^-}=1\\
Q_{\sigma^+}=x_1(1-2x_3)(1-x_4),& [x_3]Q_{\sigma^+}=-2x_1(1-x_4)
\end{array}
$$
 It follows that $\Delta=-2x_1(1-x_4)$, therefore, by equation (\ref{delta}), one has 
$B'=C+\Delta=-4x_1(1-x_4)$ and $C'=B-\Delta=-x_1(1-x_4)$. Thus, from the knowledge of $Q_{1234}$ we find the polynomial
$$Q_{1324}=x_1(1-4x_2-x_3+5x_2x_3)(1-x_4).$$ 
Here are a few more values, for
 cyclic  permutations of length 5, when  the polynomials can take four different values:

$$
\begin{array}{rcllll}
Q_{12345}&=&x_1(1-4x_2-3x_3-2x_4+9x_2x_3+7x_2x_4+5x_3x_4-14x_2x_3x_4)(1-x_5)
\\
Q_{13245}&=&x_1(1-6x_2-x_3-2x_4+9x_2x_3+10x_2x_4+2x_3x_4-14x_2x_3x_4)(1-x_5)\\
Q_{12435}&=&x_1(1-4x_2-4x_3-x_4+12x_2x_3+4x_2x_4+5x_3x_4-14x_2x_3x_4)(1-x_5)\\
Q_{14235}&=&x_1(1-6x_2-2x_3-x_4+12x_2x_3+7x_2x_4+2x_3x_4-14x_2x_3x_4)(1-x_5)
\end{array}
$$

Indeed one can check that
$$\begin{array}{l}
Q_{12345}=Q_{13452}=Q_{14523}=Q_{15234}=Q_{15432}=Q_{12543}=Q_{13254}=Q_{14325}\\
Q_{13245}=Q_{13254}=Q_{15423}=Q_{14523}\\
Q_{12435}=Q_{14352}=Q_{15342}=Q_{12534}\\
Q_{14235}=Q_{13524}=Q_{15234}=Q_{15324}=Q_{14253}=Q_{13425}=Q_{15243}=Q_{14325}
\end{array}
$$
All these examples suggest  a combinatorial significance of the loop polynomials. In particular, as pointed out in \cite{BJ},  the sum of the absolute values of the coefficients are Schr\"oder numbers (whose definition is recalled in the next section). I will give  a combinatorial formula for the loop polynomials in section 5, after introducing the objects necessary to describe it in the next section.

\section{Some combinatorial objects}
This section is devoted to a description of the combinatorial objects which will be used for expressing the loop polynomials.
\subsection{Cumulants}
First we recall briefly the classical theory of cumulants.
In probability theory the {\sl cumulants} are multilinear expressions $C_n(a_1,\ldots,a_n)$ in random variables which arise in the expansion of the free energy as

\begin{equation}\label{free-energy}
\log E[e^{\sum_{i=1}^N\lambda_ia_i}]=\sum_{n=1}^\infty\sum_{i_1+\ldots+i_N=n}\frac{\lambda_1^{i_1}}{i_1!}\cdots\frac{\lambda_1^{i_N}}{i_N!}C_n(a_I)
\end{equation}
where $E$ denotes the expectation and $a_I=(a_1,a_1,\ldots,a_1,a_2,\ldots,a_2,\ldots, ,a_N,\ldots a_N)$ with $i_k$ occurrences of $a_k$.

There is a combinatorial way to define the cumulants using the set-partitions of $\{1,\ldots,n\}$, which form a lattice ${\mathcal P}_n$, via the implicit formula:

\begin{equation}\label{mom-cum}
E[a_1\ldots a_n]=\sum_{\pi\in {\mathcal P}_n} C_\pi(a_1,\ldots,a_n)
\end{equation} 
where
\begin{equation}\label{mom-cum1}
C_\pi(a_1,\ldots,a_n)=\prod_{p\in \pi}C_{|p|}(a_{i_1},\ldots,a_{i_{|p|}})
\end{equation} 
the product being over the parts $p$ of $\pi$ with 
 $p=\{i_1,\ldots,i_{|p|}\}$ and $i_1<i_2<\ldots<i_{|p|}$.
This formula can be inverted to express the cumulants in terms of the ``moments'', i.e. $E$ evaluated on products of the $a_i$.

$$C_n(a_1,\ldots,a_n)=\sum_{\pi\in {\mathcal P}_n} E_\pi(a_1,\ldots, a_n)(-1)^{|\pi|-1}(|\pi|-1)!$$

\subsection{Non-crossing partitions and free cumulants}
\subsubsection{Non-crossing partitions}
A set partition of $\{1,2,\ldots,n\}$ (or any other totally ordered set) is called {\sl non-crossing} if there is no quadruple $i,j,k,l$ such that $i<j<k<l$ while $i,k$ belong to some part of the partition and $j,l$ belong to another part. One can picture a non-crossing partition by putting the points $1,\ldots,n$ in cyclic order on a circle, and drawing, for each part of the partition,  the convex polygon whose vertices are the elements of the part. The partition is non-crossing if and only if these polygons are disjoint. This construction shows, in particular, that if $\pi$ is a non-crossing partition then its image by the cyclic permutation $(123\ldots n)$ is still non-crossing.
For example, here is the non-crossing partition $\pi=\{1,3,4\}\cup\{2\}\cup\{5,6\},\cup\{7\}\cup\{8\}$

$$
\begin{tikzpicture}[scale=1]

\draw (0,0) circle (2cm);
\draw[fill] (0,2) circle (.1cm);\draw[fill] (0,-2) circle (.1cm);\draw[fill] (2,0) circle (.1cm);\draw[fill] (-2,0) circle (.1cm);\draw[fill] (1.41,1.41) circle (.1cm);
\draw[fill] (1.41,-1.41) circle (.1cm);\draw[fill] (-1.41,1.41) circle (.1cm);\draw[fill] (-1.41,-1.41) circle (.1cm);
\draw (0,2)--(2,0)--(1.41,-1.41)--(0,2);
\draw (0,-2)--(-1.41,-1.41);
\node at (0,2.4){$1$};\node at (1.7,1.7){$2$};\node at (2.4,0){$3$};\node at (1.7,-1.7){$4$};\node at (0,-2.4){$5$};\node at (-1.7,-1.7){$6$};\node at (-2.4,0){$7$};\node at (-1.7,1.7){$8$};
\end{tikzpicture}
$$
We denote by $NC(n)$ the set of non-crossing partitions of $\{1,2,\ldots,n\}$.
The  non-crossing partitions are counted by the Catalan numbers: there are $\text{Cat}_n=\frac{1}{n+1}{2n \choose n}$ non-crossing partitions of $\{1,2,\ldots,n\}$. Moreover, the set $NC(n)$ endowed with the reverse refinement order is a lattice, it is ranked by $rk(\pi)=n-|\pi|$ where $|\pi|$ is the number of parts of $\pi$.
I refer to \cite{NS} for an in-depth  study of the set of non-crossing partitions and for   the proofs of all statements about non-crossing partitions. 

\subsubsection{Kreweras complement and the M\"obius function}
The {\sl Kreweras complement} of a non-crossing partition is obtained by putting primed points $1',2',\ldots$ between consecutive  points on the circle and forming the largest possible polygons, with primed vertices, which do not cross the polygons of the original partition. With our example above we get the partition $K(\pi)=\{1,5,7,8\}\cup\{2,3\}\cup\{4\}\cup \{6\}$ (where we have removed the primes), see the picture below.
$$
\begin{tikzpicture}[scale=1]

\draw (0,0) circle (2cm);
\draw[fill] (0,2) circle (.1cm);\draw[fill] (0,-2) circle (.1cm);\draw[fill] (2,0) circle (.1cm);\draw[fill] (-2,0) circle (.1cm);\draw[fill] (1.41,1.41) circle (.1cm);
\draw[fill] (1.41,-1.41) circle (.1cm);\draw[fill] (-1.41,1.41) circle (.1cm);\draw[fill] (-1.41,-1.41) circle (.1cm);
\draw (0,2)--(2,0)--(1.41,-1.41)--(0,2);
\draw (0,-2)--(-1.41,-1.41);
\node at (0,2.4){$1$};\node at (1.7,1.7){$2$};\node at (2.4,0){$3$};\node at (1.7,-1.7){$4$};\node at (0,-2.4){$5$};\node at (-1.7,-1.7){$6$};\node at (-2.4,0){$7$};\node at (-1.7,1.7){$8$};

\draw[fill, color=red] (0.81,1.82) circle (.1cm);\draw[fill, color=red] (-0.81,1.82) circle (.1cm);\draw[fill, color=red] (-0.81,-1.82) circle (.1cm);\draw[fill, color=red] (0.81,-1.82) circle (.1cm);
\draw[fill, color=red] (1.82,0.81) circle (.1cm);\draw[fill, color=red] (1.82,-0.81) circle (.1cm);\draw[fill, color=red] (-1.82,-0.81) circle (.1cm);\draw[fill, color=red] (-1.82,0.81) circle (.1cm);
\node[color=red] at (-1,2.2){$1'$};\node[color=red] at (1,2.2){$2'$};
\node[color=red] at (2.1,1){$3'$};
\node[color=red] at (-1,-2.2){$6'$};\node[color=red] at (1,-2.2){$5'$};\node[color=red] at (2.1,-1){$4'$};
\node[color=red] at (-2.2,-1){$7'$};\node[color=red] at (-2.2,1){$8'$};
\draw[color=red] (-0.81,1.82)--(0.81,-1.82)--(-1.82,-0.81)--(-1.82,0.81)--(-0.81,1.82);
\draw[color=red] (0.81,1.82)--(1.82,0.81);
\end{tikzpicture}
$$

 The Kreweras complement is an anti-isomorphism for the order on $NC(n)$.
Note that our definition of the Kreweras complement does not coincide with that of \cite{NS}, rather, it corresponds to the inverse of the Kreweras complement of \cite{NS}. We use this version of the Kreweras map because it is slightly easier to use in our computations.

Recall that, for a partially ordered set, its {\sl zeta function} is defined as $\zeta(x,y)=1$ if $x\leq y$ and $\zeta(x,y)=0$ if not.
The {\sl M\"obius function} satisfies $\mu(x,y)=0$ unless $x\leq y$ and, for $x\leq z$:
$$\sum_{y:x\leq y\leq z}\mu(x,y)\zeta(x,z)=\delta_{xz}$$
If we consider $\zeta$ as an upper triangular matrix encoding the order relation, then $\mu$ is the inverse matrix.

We will  consider the M\"obius function of $NC(n)$ and put $\mu(\pi):=\mu(\pi,1_n)$ where $1_n$ is the partition with one part.
 The  function $\mu$ can be expressed, using the Kreweras complement, as
\begin{equation}\label{Mob}
\mu(\pi)=\prod_{\text{$p$ part of $K(\pi)$}}(-1)^{|p|-1}\text{Cat}_{|p|-1}
\end{equation}

As recalled in the Introduction, free cumulants have been introduced by Roland Speicher in the theory of free probability, as analogues of cumulants in probability theory. The setting is the following: we consider a unital algebra $A$, over some field $k$, which is usually taken, for applications to probability,  to be the complex numbers, although the theory can be developed, for a large part, within a purely algebraic framework. This algebra is endowed with a linear form $\varphi$ such that $\varphi(1)=1$. 
The free cumulants  form a family $\kappa_n,n=1,2,3,\ldots$ of multilinear forms on $A$, such that $\kappa_n$ is a $n$-linear form. Define, for 
 a non-crossing partition $\pi$ of $[1,n]$  a $n$-linear form $\kappa_\pi$ on $A$ by
$$\kappa_\pi(a_1,\ldots,a_n)=\prod_{\text{$p$ part of $\pi$}}\kappa_{|p|}(a_{i_1},a_{i_2},\ldots ,a_{i_{|p|}})$$
Here the product is over the parts $p$ of the partition $\pi$ and, for a part $p$, we denote by $|p|$ its size and by $i_1,\ldots ,i_{|p|}$ its elements, listed in increasing order.
The following relation defines implicitly the free cumulants as the only sequence $\kappa_n$ satisfying, for all $n$ and all $a_1,\ldots,a_n\in A$:
\begin{equation}\label{moment-cumulant}
\varphi(a_1a_2\ldots a_n)=\sum_{\pi\in NC(n)}\kappa_\pi(a_1,a_2,\ldots,a_n).
\end{equation}
 It is analogous to the combinatorial formula for cumulants (\ref{mom-cum}).
Using the M\"obius function of $NC(n)$, one   can invert the relation (\ref{moment-cumulant}) and  express the free cumulants  explicitly  as
\begin{equation}\label{cumulant-moment}
\kappa_n(a_1,a_2,\ldots,a_n)=\sum_{\pi\in NC(n)}\varphi_\pi(a_1,\ldots ,a_n)\mu(\pi).
\end{equation}
Here $\varphi_\pi$ is defined in terms of $\varphi$ by a formula similar to that for  the $\kappa_\pi$:
$$\varphi_\pi(a_1,\ldots,a_n)=\prod_{\text{$p$ part of $\pi$}}\varphi(a_{i_1}a_{i_2}\ldots a_{i_{|p|}}).$$
\subsection{Schr\"oder trees and associahedra}
\subsubsection{Schr\"oder trees}
{\sl Schr\"oder trees} are plane, rooted trees such that each internal vertex has at least two descendants. Such trees are counted, in terms of the number of leaves, by the small Schr\"oder numbers $s_n=1,1,3,11,45,\ldots$ for $n=1,2,3,\ldots$ with generating series
$\frac{1 + x - \sqrt{1 - 6x + x^2}}{4x}$ (the case $n=1$ is special in that the  root is not an internal vertex, it is a leaf). These numbers form the sequence  A001003 in \cite{OEIS}. The set of binary trees is a subset, counted again by Catalan numbers: there are $\text{Cat}_{n-1}$ binary trees with $n$ leaves. 
Figure 1 shows the Schr\"oder trees with four leaves. The leaves are the white vertices, the internal vertices are black and the root is denoted by a square. The upper row shows the binary trees, which have the maximal number of vertices (there are $\text{Cat}_3=5$ of them in the case of four leaves shown here). For each binary tree we can contract some of its left internal edges (an internal edge is an edge joining two internal vertices) in order to get a Schr\"oder tree. Below each binary tree I show the Schr\"oder trees obtained in this way. The first column has four trees, corresponding to the upper left binary tree with two left internal edges and the trees obtained by contracting these  edges. The next three columns correspond to binary trees with one left internal edge.  Finally, the last binary tree has no left internal edge.

\begin{figure}[h!]
\begin{tikzpicture}[scale=.55]
\draw[fill] (2.8,-.2)--(2.8,.2)--(3.2,.2)--(3.2,-.2)--(2.8,-.2);\draw (1.5,1) circle (.2cm);\draw (2.5,1) circle (.2cm);\draw (3.5,1) circle (.2cm);\draw (4.5,1) circle (.2cm);
\draw (3,0)--(1.7,0.85);\draw (3,0)--(2.6,0.8);\draw (3,0)--(3.4,0.8);\draw (3,0)--(4.4,0.8);

\draw[fill] (2.8,1.8)--(2.8,2.2)--(3.2,2.2)--(3.2,1.8)--(2.8,1.8);\draw[fill] (2,3) circle (.2cm);\draw (1,4) circle (.2cm);\draw (3,3) circle (.2cm);\draw (4,3) circle (.2cm);\draw (3,4) circle (.2cm);
\draw (2,3)--(3,2);\draw (2,3)--(1.15,3.85);\draw (3,2)--(3,2.8);\draw (3,2)--(3.84,2.84);\draw (2,3)--(2.85,3.85);

\draw[fill] (2.8,4.8)--(2.8,5.2)--(3.2,5.2)--(3.2,4.8)--(2.8,4.8);\draw[fill] (2,6) circle (.2cm);\draw (1,7) circle (.2cm);\draw (2,7) circle (.2cm);\draw (3,7) circle (.2cm);\draw (4,6) circle (.2cm);
\draw (3,5)--(2,6);\draw (2,6)--(1.15,6.85);\draw (2,6)--(2,6.85);\draw (2,6)--(2.84,6.84);\draw (3,5)--(3.85,5.85);

\draw[fill] (2.8,7.8)--(2.8,8.2)--(3.2,8.2)--(3.2,7.8)--(2.8,7.8);\draw[fill] (2,9) circle (.2cm);\draw[fill] (1,10) circle (.2cm);
\draw (0,11) circle (.2cm);\draw (2,11) circle (.2cm);\draw (3,10) circle (.2cm);\draw (4,9) circle (.2cm);
\draw (3,8)--(.15,10.85);\draw (3,8)--(3.85,8.85);\draw (2,9)--(2.85,9.85);\draw (1,10)--(1.85,10.85);

\draw[fill] (7.8,7.8)--(7.8,8.2)--(8.2,8.2)--(8.2,7.8)--(7.8,7.8);\draw[fill] (7,9) circle (.2cm);\draw[fill] (8,10) circle (.2cm);
\draw (8,8)--(7,9)--(8,10);\draw (6,10) circle (.2cm);\draw (7,11) circle (.2cm);\draw (9,11) circle (.2cm);\draw (9,9) circle (.2cm);
\draw (7,9)--(6.15,9.85);\draw (8,10)--(7.15,10.85);\draw (8,10)--(8.85,10.85);\draw (8,8)--(8.85,8.85);

\draw[fill] (12.8,7.8)--(12.8,8.2)--(13.2,8.2)--(13.2,7.8)--(12.8,7.8);\draw[fill] (12,9) circle (.2cm);\draw[fill] (14,9) circle (.2cm);
\draw (12,9)--(13,8)--(14,9);\draw (11,10) circle (.2cm);\draw (12.3,10) circle (.2cm);\draw (13.7,10) circle (.2cm);\draw (15,10) circle (.2cm);
\draw (12,9)--(11.15,9.85);\draw (12,9)--(12.2,9.8);\draw (14,9)--(13.8,9.8);\draw (14,9)--(14.85,9.85);

\draw[fill] (17.8,7.8)--(17.8,8.2)--(18.2,8.2)--(18.2,7.8)--(17.8,7.8);\draw[fill] (18,10) circle (.2cm);\draw[fill] (19,9) circle (.2cm);
\draw (18,8)--(19,9)--(18,10);\draw (17,9) circle (.2cm);\draw (17,11) circle (.2cm);\draw (19,11) circle (.2cm);\draw (20,10) circle (.2cm);
\draw (18,8)--(17.15,8.85);\draw (18,10)--(17.15,10.85);\draw (18,10)--(18.85,10.85);\draw (19,9)--(19.85,9.85);

\draw[fill] (22.8,7.8)--(22.8,8.2)--(23.2,8.2)--(23.2,7.8)--(22.8,7.8);\draw[fill] (24,9) circle (.2cm);\draw[fill] (25,10) circle (.2cm);\draw (23,8)--(25,10);
\draw (26,11) circle (.2cm);\draw (24,11) circle (.2cm);\draw (23,10) circle (.2cm);\draw (22,9) circle (.2cm);
\draw (23,8)--(22.15,8.85);\draw (24,9)--(23.15,9.85);\draw (25,10)--(24.15,10.85);\draw (25,10)--(25.85,10.85);

\draw[fill] (7.8,4.8)--(7.8,5.2)--(8.2,5.2)--(8.2,4.8)--(7.8,4.8);\draw[fill] (8,6) circle (.2cm);
\draw (8,5)--(8,6);\draw (7,6) circle (.2cm);\draw (7,7) circle (.2cm);\draw (9,6) circle (.2cm);\draw (9,7) circle (.2cm);
\draw (8,5)--(7.15,5.85);\draw (8,5)--(8.85,5.85);\draw (8,6)--(7.15,6.85);\draw (8,6)--(8.85,6.85);

\draw[fill] (12.8,4.8)--(12.8,5.2)--(13.2,5.2)--(13.2,4.8)--(12.8,4.8);\draw[fill] (14,6) circle (.2cm);
\draw (13,5)--(14,6);\draw (12,6) circle (.2cm);\draw (13,6) circle (.2cm);\draw (13,7) circle (.2cm);\draw (15,7) circle (.2cm);
\draw (13,5)--(12.15,5.85);\draw (13,5)--(13,5.8);\draw (14,6)--(13.15,6.85);\draw (14,6)--(14.85,6.85);

\draw[fill] (17.8,4.8)--(17.8,5.2)--(18.2,5.2)--(18.2,4.8)--(17.8,4.8);\draw[fill] (19,6) circle (.2cm);
\draw (18,5)--(19,6);\draw (17,6) circle (.2cm);\draw (18,7) circle (.2cm);\draw (19,7) circle (.2cm);\draw (20,7) circle (.2cm);
\draw (18,5)--(17.15,5.85);\draw (19,6)--(18.15,6.85);\draw (19,6)--(19,6.8);\draw (19,6)--(19.85,6.85);

\end{tikzpicture}
\caption{ Schr\"oder trees with four leaves}
\end{figure}
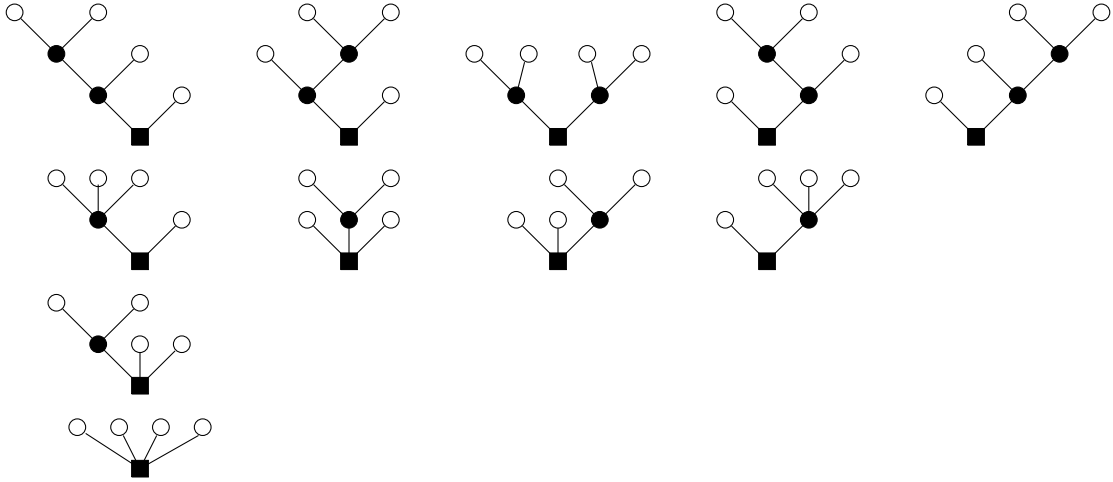
\subsubsection{Prime Schr\"oder trees}
A Schr\"oder tree is called {\sl prime} if the righmost edge of its root is a leaf. The prime Schr\"oder trees are counted by the large Schr\"oder numbers $S_n=2s_{n-1}$. Indeed given a Schr\"oder tree 
$t$, with $n-1$  leaves, we can build two prime Schr\"oder trees $t_1$ and $t_2$, with $n$ leaves: either one appends a new leaf to the right of the root of the tree to get $t_1$, or one builds a new tree by putting the Schr\"oder tree on the left branch of a binary tree with two leaves to get $t_2$, see the picture below. 
$$
\begin{tikzpicture}[scale=.55]
\draw[fill] (2.8,-.2)--(2.8,.2)--(3.2,.2)--(3.2,-.2)--(2.8,-.2);
\draw (3,0)--(2,2)--(4,2)--(3,0);
\node at (3,1.4) {$t$};
\draw (4,1) circle (.2cm);
\draw (3,0)--(3.85,.85);
\draw[fill] (11.8,-.2)--(11.8,.2)--(12.2,.2)--(12.2,-.2)--(11.8,-.2);
\draw (12,0)--(11,1);
\draw[fill] (11,1) circle (.2cm);
\draw (11,1)--(10,3)--(12,3)--(11,1);
\node at (11,2.4) {$t$};
\draw (13,1) circle (.2cm);
\draw (12,0)--(12.85,.85);
\node at (3,-.8) {$t_1$};
\node at (12,-.8) {$t_2$};
\end{tikzpicture}
$$
It is easy to see that each prime Schr\"oder tree is uniquely obtained in one of these  ways from a Schr\"oder tree with one leaf less.
We denote by $\cS_n$ the set of Schr\"oder trees with $n$ leaves and by $p\cS_n\subset \cS_n$ that of prime Schr\"oder trees.
There are $6$ prime Schr\"oder trees with $4$ leaves and they are in the first two columns of Figure 1.

\subsubsection{Corners}
 A {\sl corner} in a Schr\"oder tree is a pair of edges leading to  consecutive descendants (from left to right) of an internal vertex. Such a corner is said to belong to the vertex from which the descendants are originated. A Schr\"oder tree with $n$ leaves has exactly $n-1$ corners. 
Let $t$ be a  a  Schr\"oder tree with $n$ leaves. Label the corners of the tree from left to right by the numbers $1,2,\ldots, n-1$, in a contour exploration of the tree. Here is an example, with a prime Schr\"oder tree having $6$ internal vertices and $11$ corners:
$$
\begin{tikzpicture}[scale=1]
\draw[fill] (5.9,-.1)--(6.1,-.1)--(6.1,.1)--(5.9,.1)-- (5.9,-.1);\draw[fill] (2,1) circle (.1cm);
\draw[fill] (2,2) circle (.1cm);\draw[fill] (8,1) circle (.1cm);\draw[fill] (6,2) circle (.1cm);\draw[fill] (8,2) circle (.1cm);

\draw (4.5,1) circle (.1cm);\draw (6,1) circle (.1cm);\draw (11,1) circle (.1cm);
\draw (0.5,2) circle (.1cm);\draw (3.5,2) circle (.1cm);\draw(10,2) circle (.1cm);
\draw (1,3) circle (.1cm);\draw (3,3) circle (.1cm);\draw (5.5,3) circle (.1cm);\draw (6.5,3) circle (.1cm);\draw (7.5,3) circle (.1cm);\draw (8.5,3) circle (.1cm);

\draw (2,2)--(1.08,2.92);\draw (2,2)--(2.92,2.92);
\draw (.59,1.92)--(2,1)--(6,0)--(10.91,0.93);\draw (2,1)--(2,2);\draw (6,0)--(8,1);
\draw (2,1)--(3.42,1.92);\draw (6,0)--(4.58,.92);\draw (6,0)--(6,.92);
\draw (8,1)--(6.08,1.92);\draw (8,1)--(8,1.92);\draw (8,1)--(9.92,1.92);
\draw (6,2)--(5.58,2.92);\draw (6,2)--(6.42,2.92);\draw (8,2)--(8.42,2.92);\draw (8,2)--(7.58,2.92);

\node at (1.7,1.5) {$1$};\node at (2,2.6) {$2$};\node at (2.3,1.5) {$3$};\node at (4,.8) {$4$};\node at (5.5,.8) {$5$};\node at (6.5,.8) {$6$};\node at (6,2.6) {$7$};
\node at (7.6,1.5) {$8$};\node at (8,2.6) {$9$};\node at (8.4,1.5) {$10$};\node at (8.5,0.8) {$11$};
\end{tikzpicture}
$$

Consider the partition of $\{1,2,\ldots,n-1\}$ such that $i,j$ are in the same part if and only if they label corners which belong to the same vertex. 
It is easy to see that this partition is non-crossing and  we denote it by $\pi(t)$. The number of parts of $\pi(t)$ is equal to the number of internal vertices of $t$.
In our example the partition is formed by the sets
$$\{1,3\},\{2\},\{4,5,6,11\},\{7\},\{8,10\},\{9\}$$
There is a natural operad built on Schr\"oder trees,  which has  been used in  \cite{JMNT} to recover the M\"obius function on $NC(n)$. I will now use some of their constructions.
Consider a prime  Schr\"oder tree. One obtains a 
  forest  from the  tree by removing, for each internal vertex,  the internal edges pointing out of this vertex, except the leftmost and rightmost ones. The forest thus obtained is a system of non-crossing binary trees. Here is our example, with the leaves labelled from left to right.
$$
\begin{tikzpicture}[scale=1]
\draw[fill] (5.9,-.1)--(6.1,-.1)--(6.1,.1)--(5.9,.1)-- (5.9,-.1);\draw[fill] (2,1) circle (.1cm);
\draw[fill] (2,2) circle (.1cm);\draw[fill] (8,1) circle (.1cm);\draw[fill] (6,2) circle (.1cm);\draw[fill] (8,2) circle (.1cm);

\draw (4.5,1) circle (.15cm);\node at (4.5,1) {$\scriptstyle 5$};
\draw (6,1) circle (.15cm);\node at (6,1) {$\scriptstyle 6$};
\draw (0.45,2.05) circle (.15cm);\node at (0.45,2.05) {$\scriptstyle 1$};
\draw  (.95,3.05) circle (.15cm);\node at (0.95,3.05) {$\scriptstyle 2$};
\draw (3.03,3.03) circle (.15cm);\node at (3.03,3.03) {$\scriptstyle 3$};
\draw (3.55,2.05) circle (.15cm);\node at (3.55,2.05) {$\scriptstyle 4$};

\draw (5.47,3.05) circle (.15cm);\node at (5.45,3.05) {$\scriptstyle 7$};
\draw (6.53,3.05) circle (.15cm);\node at (6.55,3.05) {$\scriptstyle 8$};
\draw (7.47,3.05) circle (.15cm);\node at (7.47,3.05) {$\scriptstyle 9$};
\draw (8.54,3.05) circle (.16cm);\node at (8.53,3.05) {$\scriptstyle 10$};
\draw (11.05,1.05) circle (.16cm);\node at (11.05,1.05) {$\scriptstyle 12$};
\draw(10.05,2.05) circle (.16cm);\node at (10.05,2.05) {$\scriptstyle 11$};

\draw (2,2)--(1.08,2.92);\draw (2,2)--(2.92,2.92);
\draw (.59,1.92)--(2,1)--(6,0)--(10.91,0.93);
\draw (2,1)--(3.42,1.92);
\draw (8,1)--(6.08,1.92);
\draw (8,1)--(9.92,1.92);
\draw (6,2)--(5.58,2.92);\draw (6,2)--(6.42,2.92);\draw (8,2)--(8.42,2.92);\draw (8,2)--(7.58,2.92);

\end{tikzpicture}
$$

This forest   defines a non-crossing partition of the leaves of the tree such that the first and last leaves are in the same component. 
If we remove this last leaf then this non-crossing partition is  $K(\pi(t))$, the Kreweras complement of $\pi(t)$  (here we use the fact that the Schr\"oder tree is prime). In our example the non-crossing partition of the leaves is 
$$\{1,4,12\}\cup\{2,3\}\cup\{5\}\cup\{6\}\cup\{7,8,11\}\cup\{9,10\}$$ and $K(\pi(t))$ is obtained by removing $12$ from the first part of this partition.

Conversely, given a non-crossing partition $\pi$ of $1,\ldots, n-1$, all prime Schr\"oder trees $t$ such that $\pi(t)=\pi$ are obtained by chosing, for each part of $K(\pi)$, a binary tree whose leaves are numbered by the elements of the part, adding a leaf to the right of the tree containing the leftmost leaf, then joining the binary trees together to form the prime Schr\"oder tree. Since binary trees are counted by Catalan numbers, it follows that the number of prime Schr\"oder trees satifying  $\pi(t)=\pi$ is equal to 
\begin{equation}\label{numbtr}
\prod_{\text{$p$ part of $K(\pi)$}}\text{Cat}_{|p|-1}
\end{equation}where $|p|$ denotes the number of elements in the part $p$. Comparing to (\ref{Mob}) we see   that this number is 
$|\mu(\pi)|$.
See \cite{JMNT} for details about this construction.
\subsubsection{ Cluster complex and associahedra }
Consider a regular polygon drawn in the plane with $n+1$ vertices, enumerated in clockwise order. A pair of vertices which are not adjacent is called a {\sl diagonal}. One can represent the diagonal by a  segment joining the two vertices  inside the polygon. Two diagonals are {\sl compatible} if the segments they determine do not cross inside the polygon (they may cross however at the vertices). The compatibility relation determines a flag simplicial complex called the {\sl cluster complex}. The maximal subsets of pairwise compatible diagonals  determine  triangulations of the polygon with vertices on the boundary while, more generally, compatible sets determine {\sl dissections} of the polygon into smaller polygons. Here is a picture with $n=7$ where the compatible diagonals are $(1,6),(3,6)$ and $(6,8)$. The resulting dissection has two triangles with vertices $\{1,6,8\}$ and $\{6,7,8\}$ and two quadrangles with vertices $\{1,2,3,6\}$ and $\{3,4,5,6\}$.
$$
\begin{tikzpicture}[scale=1]
\draw[fill] (0.81,1.82) circle (.1cm);\draw[fill] (-0.81,1.82) circle (.1cm);\draw[fill] (-0.81,-1.82) circle (.1cm);\draw[fill] (0.81,-1.82) circle (.1cm);
\draw[fill] (1.82,0.81) circle (.1cm);\draw[fill] (1.82,-0.81) circle (.1cm);\draw[fill] (-1.82,-0.81) circle (.1cm);\draw[fill] (-1.82,0.81) circle (.1cm);
\node at (-1,2.2){$3$};\node at (1,2.2){$4$};
\node at (2.1,1){$5$};
\node at (-1,-2.2){$8$};\node at (1,-2.2){$7$};\node at (2.1,-1){$6$};
\node at (-2.2,-1){$1$};\node at (-2.2,1){$2$};
\draw (-0.81,1.82)--(0.81,1.82)--(1.82,0.81)--(1.82,-0.81)--(0.81,-1.82)--(-0.81,-1.82)--(-1.82,-0.81)--(-1.82,0.81)--(-0.81,1.82);
\draw (-0.81,1.82)--(1.82,-0.81)--(-1.82,-0.81);\draw (1.82,-0.81)--(-0.81,-1.82);
\end{tikzpicture}
$$
The cluster complex and its dual simplicial complex, the {\sl associahedron}, can be both realized as the boundary of  convex polytopes, see e.g. \cite{FR} for more information and relations with Coxeter combinatorics.

There is a simple bijection between the faces of the cluster complex of a polygon with $n+1$ vertices and  Schr\"oder trees. Take a dissection of a polygon with $n+1$ edges and draw the polygon with the edge $[n,n+1]$ at the base. Put a vertex inside each polygonal face of the dissection, these vertices correspond to internal vertices of a Schr\"oder tree. The vertex inside the face having $[n,n+1]$ in its boundary will be  the root of the tree. Draw internal edges between the vertices across the diagonals of the dissection and external edges leading to leaves across the segments $ [n+1,1]$ and $[i,i+1]$ for $i=1,2,\ldots,n-1$. The resulting tree is a Schr\"oder tree  with $n$ leaves and it is easy to see that this is a bijection between dissections and Schr\"oder trees. Here is the Schr\"oder tree corresponding to the above dissection.
$$
\begin{tikzpicture}[scale=1]
\draw[fill] (0.81,1.82) circle (.1cm);\draw[fill] (-0.81,1.82) circle (.1cm);\draw[fill] (-0.81,-1.82) circle (.1cm);\draw[fill] (0.81,-1.82) circle (.1cm);
\draw[fill] (1.82,0.81) circle (.1cm);\draw[fill] (1.82,-0.81) circle (.1cm);\draw[fill] (-1.82,-0.81) circle (.1cm);\draw[fill] (-1.82,0.81) circle (.1cm);
\node at (-1,2.2){$3$};\node at (1,2.2){$4$};
\node at (2.1,1){$5$};
\node at (-1,-2.2){$8$};\node at (1,-2.2){$7$};\node at (2.1,-1){$6$};
\node at (-2.2,-1){$1$};\node at (-2.2,1){$2$};
\draw[dashed] (-0.81,1.82)--(0.81,1.82)--(1.82,0.81)--(1.82,-0.81)--(0.81,-1.82)--(-0.81,-1.82)--(-1.82,-0.81)--(-1.82,0.81)--(-0.81,1.82);
\draw[dashed] (-0.81,1.82)--(1.82,-0.81)--(-1.82,-0.81);\draw[dashed] (1.82,-0.81)--(-0.81,-1.82);

\draw[fill,color=red] (-0.27,-1.15) circle (.1cm);
\draw[fill,color=red] (-0.4,0.24) circle (.1cm);
\draw[fill,color=red] (0.88,0.88) circle (.1cm);
\draw[color=red] (0.7,-1.55)--(-0.27,-1.15)--(-0.4,0.24)--(0.88,0.88);
\draw[color=red] (-1.7,-1.7) circle (.1cm);\draw[color=red] (-2.38,0) circle (.1cm);\draw[color=red] (-1.7,1.7) circle (.1cm);\draw[color=red] (1.7,-1.7) circle (.1cm);
\draw[color=red] (2.38,0) circle (.1cm);\draw[color=red] (0,2.38) circle (.1cm);\draw[color=red] (1.7,1.7) circle (.1cm);
\draw[color=red] (-1.62,1.62)--(-0.4,0.24);\draw[color=red] (-2.32,0)--(-0.4,0.24);\draw[color=red] (-1.60,-1.64)--(-0.27,-1.15);
\draw[color=red] (.03,2.30)--(0.88,0.88);\draw[color=red] (2.3,0)--(0.88,0.88);\draw[color=red] (1.64,1.64)--(0.88,0.88);\draw[color=red] (0.7,-1.55)--(1.64,-1.64);
\draw[fill,color=red] (0.6,-1.65)--(0.8,-1.65)--(0.8,-1.45)--(0.6,-1.45)--(0.6,-1.65);
\end{tikzpicture}
$$

 Observe that the corners of the tree are in bijection with the vertices $1,2,\ldots,n-1$ of the polygon. The Schr\"oder tree is prime if and only if the vertex $n$ does not belong to one of the diagonals of the dissection.
Using this bijection we could rephrase all the constructions of the next section in terms of  the cluster complex or the associahedra. However we think that using Schr\"oder trees makes our constructions easier to understand.
\section{The formula}

In this section I give an explicit formula for the polynomials $Q_\sigma$.
For a prime Schr\"oder tree $t$, with $n+1$ leaves, an integer $k\in[1,n]$ and a circular permutation $\sigma$, label the corners of $t$, from left to right, by the numbers $\sigma(k),\sigma^2(k),\ldots,\sigma^{n-1}(k),k$.  For each internal vertex $v$ of $t$ let $i(v)$ be the smallest label of all corners belonging to $v$. Define $x^{t,k,\sigma }$ as the product of $-x_{i(v)}$ over all internal vertices of $t$. Let us consider again the Schr\"oder tree of section 4, and the cycle $\sigma= (2,4,8,5,9,1,6,11,10,3,7)$, with $k=7$:

$$
\begin{tikzpicture}[scale=1]
\draw[fill] (5.9,-.1)--(6.1,-.1)--(6.1,.1)--(5.9,.1)-- (5.9,-.1);\draw[fill] (2,1) circle (.1cm);
\draw[fill] (2,2) circle (.1cm);\draw[fill] (8,1) circle (.1cm);\draw[fill] (6,2) circle (.1cm);\draw[fill] (8,2) circle (.1cm);

\draw (4.5,1) circle (.1cm);\draw (6,1) circle (.1cm);\draw (11,1) circle (.1cm);
\draw (0.5,2) circle (.1cm);\draw (3.5,2) circle (.1cm);\draw(10,2) circle (.1cm);
\draw (1,3) circle (.1cm);\draw (3,3) circle (.1cm);\draw (5.5,3) circle (.1cm);\draw (6.5,3) circle (.1cm);\draw (7.5,3) circle (.1cm);\draw (8.5,3) circle (.1cm);

\draw (2,2)--(1.08,2.92);\draw (2,2)--(2.92,2.92);
\draw (.59,1.92)--(2,1)--(6,0)--(10.91,0.93);\draw (2,1)--(2,2);\draw (6,0)--(8,1);
\draw (2,1)--(3.42,1.92);\draw (6,0)--(4.58,.92);\draw (6,0)--(6,.92);
\draw (8,1)--(6.08,1.92);\draw (8,1)--(8,1.92);\draw (8,1)--(9.92,1.92);
\draw (6,2)--(5.58,2.92);\draw (6,2)--(6.42,2.92);\draw (8,2)--(8.42,2.92);\draw (8,2)--(7.58,2.92);

\node at (1.7,1.5) {$2$};\node at (2,2.6) {$4$};\node at (2.3,1.5) {$8$};\node at (4,.8) {$5$};\node at (5.5,.8) {$9$};\node at (6.5,.8) {$1$};\node at (6,2.6) {$6$};
\node at (7.6,1.5) {$11$};\node at (8,2.6) {$10$};\node at (8.4,1.5) {$3$};\node at (8.5,0.8) {$7$};
\end{tikzpicture}
$$
The monomial associated to this tree is then 
$$ x^{t,7,\sigma}=(-x_2)(-x_4)(-x_1)(-x_6)(-x_3)(-x_{10})$$
\begin{theorem}\label{Th}
For each $k\in[1,n]$ one has  
\begin{equation}
\label{formuleSchroder}Q_\sigma(x_1,\ldots,x_n)=-\sum_{t\in p\cS_{n+1}}x^{t,k,\sigma }
\end{equation}
\end{theorem}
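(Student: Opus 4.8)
The plan is to prove that the polynomial defined by the right-hand side of (\ref{formuleSchroder}), which I denote $R^k_\sigma$ --- and which I will also show does not depend on $k$ --- satisfies all five properties of Definition~\ref{loop}; since \cite{BJ} prove that these properties determine the loop polynomials uniquely, it must then coincide with $Q_\sigma$. The first step is to rewrite $R^k_\sigma$ more conveniently. For $t\in p\cS_{n+1}$ the monomial $x^{t,k,\sigma}$ depends on $t$ only through the non-crossing partition $\pi(t)$ of the set of corners $\{1,\dots,n\}$: regarding a part $P$ as a set of corner-positions and writing $m_\sigma(P):=\min\{\sigma^p(k):p\in P\}$, one has $x^{t,k,\sigma}=\prod_{P\in\pi(t)}(-x_{m_\sigma(P)})$. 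Combining this with the count $\#\{t:\pi(t)=\pi\}=|\mu(\pi)|$ recalled in the previous section (see (\ref{numbtr})) and the identity $\mu(\pi)=(-1)^{|\pi|-1}|\mu(\pi)|$ (a short computation from (\ref{Mob}) and the rank-reversing property of the Kreweras complement) one obtains
\[
R^k_\sigma=\sum_{\pi\in NC(n)}\mu(\pi)\prod_{P\in\pi}x_{m_\sigma(P)},
\]
a form in which every verification becomes a statement about the M\"obius function of $NC(n)$.

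Properties (1) and (2) are now immediate for each fixed $k$: the product above is squarefree because the values $m_\sigma(P)$ are distinct (the parts being disjoint), and the cases $n=1,2$ are a direct check on $p\cS_2$ and $p\cS_3$. Next, $R^k_\sigma$ is independent of $k$, by reindexing the sum: replacing $k$ by $\sigma(k)$ cyclically shifts the position labels, hence replaces each part $P$ by $P+1$, and since $\pi\mapsto\pi+1$ is a bijection of $NC(n)$ under which $|\mu|$ --- and therefore $\mu$, the sign $(-1)^{|\pi|-1}$ being shift-invariant --- is unchanged, the sum is unaffected. Write $R_\sigma$ for this common value. Finally, the part of $\pi$ containing the position carrying the label $1$ always has $m_\sigma(\cdot)=1$, so $x_1\mid R_\sigma$; together with multilinearity this reduces property (3) to the single identity $R_\sigma|_{x_n=1}=0$.

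To prove $R_\sigma|_{x_n=1}=0$ I would construct a sign-reversing involution. The variable $x_n$ occurs in $\prod_P x_{m_\sigma(P)}$ exactly when the position $q_n$ carrying the label $n$ forms a singleton part of $\pi$; in that case adjoin $q_n$ to the unique part it can join without creating a crossing. On the trees this is the move that dissolves the binary internal vertex carrying the corner labelled $n$ into its parent, with the one exception --- occurring precisely when $k=n$ --- that this vertex is the root, when one instead lifts the children of the root's leftmost child up to the root. Primality of the tree is preserved, and because the label $n$ sits at the last corner if and only if $k=n$ the two cases never overlap, so this really is an involution pairing the terms in which $x_n$ appears with those in which it does not, reversing the sign carried by $\mu$. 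Hence $x_n=1$ makes the sum collapse to $0$, proving property (3).

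The real work is in properties (4) and (5). The key point is that if one uses the parameter $s_i(k)$ for the conjugate cycle $s_i\sigma s_i$, then its labelling of a tree is obtained from the $\sigma$-labelling simply by interchanging the two corners labelled $i$ and $i+1$; consequently $R_\sigma$ and $R_{s_i\sigma s_i}$ differ only through the parts of $\pi$ meeting $\{q_i,q_{i+1}\}$. Splitting the $\pi$-sum according to whether $q_i$ and $q_{i+1}$ lie in the same part, one checks directly that the corresponding monomials agree on the ``same part'' terms and agree after the substitution $x_i=x_{i+1}$ on the others, which gives the continuity relation (4) immediately. For the exchange relation (5) one computes $[x_i]$ and $[x_{i+1}]$ of $R_\sigma+R_{s_i\sigma s_i}$ and must identify the discrepancy with $2\,([x_i]R_{\sigma^-})([x_{i+1}]R_{\sigma^+})$. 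Here the factorisation $s_i\sigma=\sigma^-\sigma^+$ enters through its effect on positions: composing the cyclic permutation $(1\,2\,\cdots\,n)$ with the transposition $(q_i\,q_{i+1})$ splits it into two cycles, the position-cycles of $\sigma^-$ and of $\sigma^+$, which cut $\{1,\dots,n\}$ into two complementary cyclic arcs; the non-crossing partitions contributing to the discrepancy are exactly those that do not join the two arcs, hence factor as pairs $(\rho^-,\rho^+)$ of non-crossing partitions of the arcs, which one recognises as the partitions underlying $[x_i]R_{\sigma^-}$ and $[x_{i+1}]R_{\sigma^+}$. I expect the main obstacle to be showing that $\mu(\pi)$ factorises correctly along this arc decomposition --- up to a sign and the combinatorial factor $2$ recording which of the two arcs absorbs the ``join'' --- which amounts to analysing the Kreweras complement of a non-crossing partition supported on two complementary arcs of the circle. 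Once (1)--(5) are verified, the uniqueness theorem of \cite{BJ} gives $R_\sigma=Q_\sigma$, which is the assertion of the theorem.
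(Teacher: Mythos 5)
Your overall strategy is the paper's: rewrite the tree sum as $\sum_{\pi\in NC(n)}\mu(\pi)\prod_{P}x_{m_\sigma(P)}$, deduce $k$-independence from cyclic invariance of $\mu$, and verify properties (1)--(5) of Definition~\ref{loop} before invoking the uniqueness result of \cite{BJ}. Properties (1), (2), (4) and the divisibility by $x_1$ are handled as in the paper. For the factor $(1-x_n)$ your tree-level involution (contract the binary vertex carrying corner $n$ into its parent, with the root-exceptional case) is correct and is exactly the paper's $t_1/t_2$ pairing once one takes $k=n$; but your partition-level phrasing is wrong as stated: a singleton block of a non-crossing partition can in general be adjoined to \emph{several} blocks without creating a crossing, so ``the unique part it can join'' does not define a map. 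Only the tree formulation (where the parent vertex is canonical) makes the involution well defined.

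The genuine gap is in the exchange relation (5), and you flag it yourself: everything reduces to showing that for partitions $\pi$ whose unique part meeting $\{q_i,q_{i+1}\}$ bridges the two complementary arcs cut out by $\sigma^\pm$, the weight $\mu(\pi)\prod_P x_{m(P)}$ factors as the product of the weights of $\pi^-$ and $\pi^+$. That multiplicativity of the M\"obius function of $NC(n)$ along a bridging block is precisely the content of the step, not a routine verification, and your proposal does not supply it. The paper avoids this computation entirely by staying at the level of trees: it takes $k=i+1$, so that the corner labelled $i+1$ is the \emph{last} corner and hence the common vertex of the corners $i$ and $i+1$ is forced to be the root; cutting the tree there gives a weight-preserving bijection $t\leftrightarrow(t^+,t^-)$ with $p\cS$-trees computing $[x_{i+1}]R_{\sigma^+}$ and $[x_i]R_{\sigma^-}$, and the identity $B-C'=\Delta$ follows with no M\"obius-function analysis. (Relatedly, your account of the factor $2$ is off: it does not record ``which arc absorbs the join'' but arises because the left-hand side of (5) equals $(B-C')+(B'-C)$ and each bracket separately equals $\Delta$.) To complete your argument you would either have to prove the factorization of $\mu$ over the arc decomposition directly, or convert back to the tree picture at this point --- in which case you should also adopt the normalization $k=i+1$ so that the cut happens at the root and the two pieces are honestly prime Schr\"oder trees for $\sigma^+$ and $\sigma^-$.
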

\begin{proof}Denote by $R^k_\sigma(x_1,\ldots,x_n)$ the polynomial on the right hand side of (\ref{formuleSchroder}).  First note that, by construction, it is of degree one in each variable. 
Also, verifiying the formula for $n=1,2$  is a trivial exercise.

 We now check that $R^k_\sigma$  depends only on the cyclic permutation $\sigma$ and not on $k$. For this observe that the term $x^{t,k,\sigma}$ can be expressed in terms of the partition $\pi(t)$. Using (\ref{numbtr}) and  the remark following this equation, we can rewrite the definition of $R^k_\sigma$ as 

\begin{equation}\label{R_cum}
R^k_\sigma(x_1,\ldots,x_n)=-\sum_{\pi\in NC(n)}\prod_{p\in \pi}(-x_{i(p)})|\mu(K(\pi))|
\end{equation}
Here, for each part $p$ of $\pi$, we let $i(p)$ denote the minimum of the numbers $\sigma^{i_1}(k),\sigma^{i_2}(k),\ldots,\sigma^{i_{|p|}}(k)$ where $p=\{i_1,\ldots, i_{|p|}\}$. 
We will interpret this formula as a free cumulant in the next section but for now 
 note that it implies that the polynomial on the RHS  depends only on the circular permutation $\sigma$ and not on the chosen $k$, since the factor $|\mu(K(\pi))|$ is invariant by a circular permutation of $\pi$. I will therefore denote this polynomial by $R_\sigma$.
 
Let us now check  that these polynomials satisfy the boundary conditions. Since 1 is always the smallest corner belonging to its vertex, the variable $x_1$ is a factor of every term in the sum defining $R_\sigma$, therefore $x_1$ divides $R_\sigma$.
Since the polynomial $R_\sigma$ does not depend on $k$, we can take $k=n$ in the formula.  Each prime Schr\"oder tree is either of the form $t_1$ or $t_2$ as in the construction of section 4.2.2 for some Schr\"oder tree $t$. The corresponding term in the first case is a monomial in $x_1,\ldots,x_{n-1}$ (since $n$ is certainly larger than all the labels of all corners belonging to the root) and in the second case it is the same monomial multiplied by $-x_n$ therefore the polynomial $R_\sigma$ has $1-x_n$ as a factor.

It remains to check the continuity and the exchange conditions (4) and (5). For this we will assume that $k=i+1$ when computing $R_\sigma$ and that $k=i$ when computing $R_{s_i\sigma s_i}$. Condition (4) is immediate to check from the definition so we just need to check the exchange condition. For this let $i,i+1$ be two indices and expand
$$R_\sigma=A+x_iB+x_{i+1}C+x_{i}x_{i+1}D,\quad R_{s_i\sigma s_i}=A'+x_iB'+x_{i+1}C'+x_{i}x_{i+1}D'$$ where $A,A',B,B',C,C',D,D'$ are polynomials in the other variables $x_1,x_2,\ldots,x_{i-1},x_{i+2},\ldots, x_n$.
By the continuity condition one has $A'=A,B'+C'=B+C$ and $D'=D$. Consider now the set of all prime Schr\"oder trees $t$ such that the labels $i$ and $i+1$ (with $t$ labelled by $\sigma$) belong to different vertices while $i$ is the smallest label in its vertex and $i+1$ is not. In the sum 
(\ref{formuleSchroder}) these trees contribute to $B$ for $R_\sigma$ and to $C'$ for $R_{s_i\sigma s_i}$, when the labelling is done according to $s_i\sigma s_i$. Similarly if we exchange the roles of $i$ and $i+1$, they contribute to respectively to $C$ and to $B'$. The value of $B-C'=B'-C$ is obtained by taking the sum  over the remaining trees $t$, for which $i$ and $i+1$ label corners which belong to the same vertex and $i$ is the smallest label at this vertex. Such trees contribute both to $B$ and to $B'$. They have the form

$$
\begin{tikzpicture}[scale=.75]
\draw[fill] (2.8,-.2)--(2.8,.2)--(3.2,.2)--(3.2,-.2)--(2.8,-.2);
\draw (3,0)--(-3,4)--(-.5,4)--(3,0);
\node at (3,3.2) {$b$};
\draw[fill] (3,2) circle (.2cm);
\draw (3,0)--(3,2);
\draw (3,2)--(2.2,4)--(3.8,4)--(3,2);
\draw (3,0)--(5,4)--(6,4)--(3,0);
\draw (8,2) circle (.2cm);
\draw (3,0)--(7.85,1.85);
\node at (-1,3.2) {$a$};
\node at (5,3.2) {$c$};
\node at (2.4,1.6) {$\text {\small \it i}$};
\node at (5,1.3) {$ \text{\small{\it i}\,+1}$};
\node at (3,-1) {$t$};
\end{tikzpicture}
$$
for some trees $a,b,c$. I have shown the corners labelled $i$ and $i+1$, which belong to the root.
We can cut such a tree into two trees by cutting through the edge at the right of the corner labelled $i$. Then we exchange the labels $i+1$ and $i$ to get
two trees $t^+$ and $t^-$  as below
$$
\begin{tikzpicture}[scale=.75]
\draw[fill] (1.8,-.2)--(1.8,.2)--(2.2,.2)--(2.2,-.2)--(1.8,-.2);
\draw[fill] (4.8,-.2)--(4.8,.2)--(5.2,.2)--(5.2,-.2)--(4.8,-.2);
\draw (2,0)--(-4,4)--(-1.5,4)--(2,0);
\node at (5,3.2) {$b$};
\draw (2,2) circle (.2cm);
\draw[fill] (5,2) circle (.2cm);
\draw (2,0)--(2,1.8);
\draw (5,0)--(5,2);
\draw (5,2)--(4.2,4)--(5.8,4)--(5,2);
\draw (5,0)--(7,4)--(8,4)--(5,0);
\draw (10,2) circle (.2cm);
\draw (5,0)--(9.85,1.85);
\node at (-1.8,3.2) {$a$};
\node at (7,3.2) {$c$};
\node at (1.3,1.6) {$ \text{\small{\it i}\,+1}$};
\node at (7.1,1.3) {$ \text{\small\it i}$};
\draw[dashed] (2.3,0)--(4.7,0);
\draw[dashed] (2.3,2)--(4.7,2);
\node at (2,-1) {$t^+$};
\node at (5,-1) {$t^-$};
\end{tikzpicture}
$$
The tree $t^+$  gives a contribution to the formula (\ref{formuleSchroder}) defining $[x_{i+1}]R_{\sigma^+}(x^+)$ while $t^-$ gives a contribution to $[x_i]R_{\sigma^-}(x^-)$, if we compute these polynomials by using the ordering of the cycles $\sigma^-$ and $\sigma^+$ which put $i$ and $i+1$ at the end. Conversely any pair of such trees can be combined to form a tree $t$ as above. It follows that $B'-C=B-C'=\Delta=([x_{i+1}]R_{\sigma^+}(x^+))([x_i]R_{\sigma^-}(x^-))$..
This proves  that the polynomials $R_\sigma$ satisfy the exchange relations therefore, by uniqueness, one has $Q_\sigma=R_\sigma$, as claimed.
\end{proof}

\section{The polynomials $Q_\sigma$ as free cumulants}
Let us  revert to the original interpretation of the polynomials $Q_\sigma$, where the $x_i$ form an increasing subset of $[0,1]$ (see section \ref{model}).
Consider the interval $[0,1]$ equipped with Lebesgue measure and introduce the indicator functions $\Pi_x:=1_{[0,x]}$. We consider them as random variables on the probability space $[0,1]$. Notice that they satisfy the relations:
$$\Pi_x\Pi_y=\Pi_{x\wedge y}.$$
In particular, denoting $\varphi$ the integration with respect to Lebesgue measure we have
\begin{equation}\label{min}
\varphi(\Pi_{u_1}\Pi_{u_2}\ldots\Pi_{u_p})=\min(u_1,u_2,\ldots,u_p).
\end{equation}
\begin{theorem}\label{Th2}
For each $k\in[1,n]$ and $0\leq x_1\leq x_2\ldots\leq x_n\leq 1$ one has
\begin{equation}
\label{formulecumulant}Q_\sigma(x_1,\ldots,x_n)=\kappa_n(\Pi_{x_{\sigma(k)}},\ldots,\Pi_{x_{\sigma^{n-1}(k)}},\Pi_{x_{k}})
\end{equation}
\end{theorem}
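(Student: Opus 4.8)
The plan is to compute the free cumulant on the right of \eqref{formulecumulant} through the moment--cumulant relation \eqref{cumulant-moment} and to recognize the combinatorial formula of Theorem~\ref{Th}. Fix $k$ and abbreviate $b_j:=\Pi_{x_{\sigma^j(k)}}$ for $j=1,\dots,n$, so that $b_n=\Pi_{x_k}$ (since $\sigma^n(k)=k$) and the right-hand side of \eqref{formulecumulant} is $\kappa_n(b_1,\dots,b_n)$. The first step is to evaluate the moments entering \eqref{cumulant-moment}: for a part $p=\{i_1<\dots<i_{|p|}\}$ of a non-crossing partition $\pi\in NC(n)$, relation \eqref{min} gives
$$\varphi\big(b_{i_1}b_{i_2}\cdots b_{i_{|p|}}\big)=\min\big(x_{\sigma^{i_1}(k)},\dots,x_{\sigma^{i_{|p|}}(k)}\big)=x_{i(p)},\qquad i(p):=\min\{\sigma^{i_1}(k),\dots,\sigma^{i_{|p|}}(k)\},$$
where the last equality uses the hypothesis $0\le x_1\le\dots\le x_n\le 1$ (the smallest among a set of $x$'s is the one of smallest index); this is the only place where the ordering of the $x_i$ is used. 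Hence $\varphi_\pi(b_1,\dots,b_n)=\prod_{p\in\pi}x_{i(p)}$, and this is exactly the monomial attached to $\pi$ in Theorem~\ref{Th}, up to the sign $(-1)^{|\pi|}$: when the corners of a prime Schr\"oder tree $t\in p\cS_{n+1}$ are labelled $\sigma(k),\dots,\sigma^{n-1}(k),k$ and $\pi(t)=\pi$, the labels at an internal vertex form a part $p$ of $\pi$ whose smallest element $i(v)$ equals $i(p)$, so $x^{t,k,\sigma}=\prod_{p\in\pi}(-x_{i(p)})$.

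Plugging the moments into \eqref{cumulant-moment} yields $\kappa_n(b_1,\dots,b_n)=\sum_{\pi\in NC(n)}\mu(\pi)\prod_{p\in\pi}x_{i(p)}$, so it remains to match the coefficients. By \eqref{Mob}, $\mu(\pi)=\prod_{p\in K(\pi)}(-1)^{|p|-1}\text{Cat}_{|p|-1}$, hence its sign is $(-1)^{\sum_{p\in K(\pi)}(|p|-1)}=(-1)^{\,n-|K(\pi)|}$; since $K$ is an anti-isomorphism of the lattice $NC(n)$, whose rank function is $\pi\mapsto n-|\pi|$ and whose maximal element has rank $n-1$, we get $(n-|\pi|)+(n-|K(\pi)|)=n-1$, i.e.\ $|K(\pi)|=n+1-|\pi|$, so the sign of $\mu(\pi)$ equals $(-1)^{|\pi|-1}$. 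Therefore $\mu(\pi)=(-1)^{|\pi|-1}|\mu(\pi)|$ and
$$\kappa_n(b_1,\dots,b_n)=-\sum_{\pi\in NC(n)}(-1)^{|\pi|}\,|\mu(\pi)|\prod_{p\in\pi}x_{i(p)}=-\sum_{\pi\in NC(n)}|\mu(\pi)|\prod_{p\in\pi}(-x_{i(p)}).$$
By the count recalled in Section~4 (equation \eqref{numbtr} together with the remark after it), $|\mu(\pi)|$ is precisely the number of prime Schr\"oder trees $t\in p\cS_{n+1}$ with $\pi(t)=\pi$, each contributing the monomial $x^{t,k,\sigma}=\prod_{p\in\pi}(-x_{i(p)})$ found above; hence the last expression equals $-\sum_{t\in p\cS_{n+1}}x^{t,k,\sigma}=Q_\sigma(x_1,\dots,x_n)$ by Theorem~\ref{Th}. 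In particular the right-hand side of \eqref{formulecumulant} is independent of $k$, in accordance with the cyclic invariance of free cumulants over the commutative (hence tracial) probability space of this section.

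I do not anticipate a genuine obstacle: once Theorem~\ref{Th} is available the argument is just a rearrangement of signs together with a comparison of the two indexing sets. The two points to handle with care are (i) the identification $\varphi_\pi(b_1,\dots,b_n)=\prod_{p\in\pi}x_{i(p)}$, which rests on the ordering of the $x_i$ and on checking that the function $i(\cdot)$ from Theorem~\ref{Th} is the minimum of the shifted indices $\sigma^{i_j}(k)$ over a part; and (ii) the sign identity $\mu(\pi)=(-1)^{|\pi|-1}|\mu(\pi)|$, for which the relation $|\pi|+|K(\pi)|=n+1$ coming from $K$ being an anti-isomorphism of $NC(n)$ must be used. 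Equivalently, one may observe that $\sum_{\pi\in NC(n)}\mu(\pi)\prod_{p\in\pi}x_{i(p)}$ is the expression \eqref{R_cum} for $R^k_\sigma=Q_\sigma$ rewritten, but unwinding it back to the sum over Schr\"oder trees makes the comparison with Theorem~\ref{Th} the most transparent.
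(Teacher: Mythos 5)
Your proof is correct and follows essentially the same route as the paper's primary argument: evaluate the moments $\varphi_\pi$ via \eqref{min} and the ordering of the $x_i$, then reduce the comparison with Theorem~\ref{Th} to the sign identity $\mu(\pi)=(-1)^{|\pi|-1}\,\lvert\mu(\pi)\rvert$ coming from \eqref{Mob} and $|\pi|+|K(\pi)|=n+1$. You spell out the details more fully than the paper does (which only sketches this and then offers an alternative verification of the axioms of Definition~\ref{loop}), and your use of $\lvert\mu(\pi)\rvert$ as the number of prime Schr\"oder trees with $\pi(t)=\pi$ is the consistent reading of \eqref{numbtr} and \eqref{R_cum}.
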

\begin{proof}
Using formulas (\ref{cumulant-moment}), (\ref{R_cum}) and (\ref{min}) it is enough to check the signs, namely that 
$$\prod_{\text{$p$ part of $K(\pi)$}}(-1)^{|p|-1}=-\prod_{\text{$p$ part of $\pi$}}(-1)$$ This follows from $\sum_{\text{$p$ part of $K(\pi)$}}|p|=n$ and the well known fact that
$|K(\pi)|+|\pi|=n+1$ ($|\pi|$ denotes the number of parts of $\pi$), see \cite{NS}.

Alternatively we can directly prove that the right hand side of (\ref{formulecumulant}) satisfies all the properties of Definition \ref{loop}.
Since it is very similar to the proof of Theorem \ref{Th} I only sketch the argument.
Let us call $S_\sigma$ this right hand side (it does not depend on $k$ by the cyclic invariance  of free cumulants). Property  $(1)$ is obvious from the moment-cumulant formula (\ref{cumulant-moment}) and  (\ref{min}) while $(2)$ follows from a simple computation. Relations $(3)$ follow from the fact that a free cumulant vanishes if one of its entries is a constant while $(4)$ just states that $\kappa_p(\Pi_{u_1},\ldots\Pi_{u_p})$ is a continuous function of the $u_j$ on the cube $[0,1]^p$. Finally it remains to check the exchange condition $(5)$.
For this let $\sigma$ be a cyclic permutation and write $S_\sigma $ and   $S_{s_i\sigma s_i}$ as sums over non-crossing partitions, using (\ref{cumulant-moment}) and (\ref{min}). Let  $\pi$ be a non-crossing partition of $[1,n]$ and assume that in the corresponding term of
(\ref{cumulant-moment}) the  $x_i$ and $x_{i+1}$ are in  different parts of $\pi$, then it is easy to see that the corresponding terms have the same value in 
$S_\sigma$ and $S_{s_i\sigma s_i}$. Suppose  now that $x_i$ and $x_{i+1}$ are in the same part $p$ of $\pi$, corresponding to the following picture where we show only the part $p$.

$$
\begin{tikzpicture}[scale=1]
\draw[fill=yellow] (0,2)--(2,0)--(1.41,-1.41)--(-.81,-1.81)--(-1.81,-.81)--(-1.41,1.41)--(0,2);
\draw (0,0) circle (2cm);
\draw[fill] (0,2) circle (.1cm);\draw[fill] (0,-2) circle (.1cm);\draw[fill] (2,0) circle (.1cm);\draw[fill] (-2,0) circle (.1cm);\draw[fill] (1.41,1.41) circle (.1cm);
\draw[fill] (1.41,-1.41) circle (.1cm);\draw[fill] (-1.41,1.41) circle (.1cm);\draw[fill] (-1.41,-1.41) circle (.1cm);

\node at (0,0){$p$};
\node at (2.4,0){$i$};
\node at (-2.3,-.81){$i+1$};
\node at (-2.2,-1.7){$\sigma^{-1}(i+1)$};
\node at (2.4,1){$\sigma^{-1}(i)$};
\node at (-2,2){$\sigma,\pi$};
\draw[fill] (0.81,1.82) circle (.1cm);
\draw[fill] (-0.81,1.82) circle (.1cm);
\draw[fill] (-0.81,-1.82) circle (.1cm);\draw[fill] (0.81,-1.82) circle (.1cm);
\draw[fill] (1.82,0.81) circle (.1cm);\draw[fill] (1.82,-0.81) circle (.1cm);
\draw[fill] (-1.82,-0.81) circle (.1cm);
\draw[fill] (-1.82,0.81) circle (.1cm);

\end{tikzpicture}
$$
When we multiply $\sigma$ on the left by $s_i$ we obtain two cycles  $\sigma ^+$ and  $\sigma^-$ and we can accordingly cut the part $p$ into two parts $p^+$ and $p^-$. Each other part of $\pi$ is either included in $\sigma ^+$ or in  $\sigma^-$ so that  we   get two noncrossing partitions, $\pi^+$ and $\pi^-$, of $\sigma ^+$ and  $\sigma^-$.
$$
\begin{tikzpicture}[scale=1]
\draw[fill=yellow] (0,2)--(-1.41,1.41)--(-1.81,-.81)--(0,2);
\draw[fill=yellow] (1.41,-1.41)--(-.81,-1.81)--(2,0)--(1.41,-1.41);

\draw  (1.82,0.81) arc[start angle=22.5, end angle=204,radius=2cm]--(1.82,0.81);
\draw  (-1.41,-1.41) arc[start angle=225, end angle=360,radius=2cm]--(-1.41,-1.41);

\draw[fill] (0,2) circle (.1cm);\draw[fill] (0,-2) circle (.1cm);\draw[fill] (2,0) circle (.1cm);\draw[fill] (-2,0) circle (.1cm);\draw[fill] (1.41,1.41) circle (.1cm);
\draw[fill] (1.41,-1.41) circle (.1cm);\draw[fill] (-1.41,1.41) circle (.1cm);\draw[fill] (-1.41,-1.41) circle (.1cm);

\node at (2.4,0){$i$};
\node at (-2.3,-.81){$i+1$};
\node at (-2.2,-1.7){$\sigma^{-1}(i+1)$};
\node at (2.4,1){$\sigma^{-1}(i)$};
\node at (-2,2){$\sigma^{+},\pi^+$};
\node at (2,-2){$\sigma^{-},\pi^-$};
\draw[fill] (0.81,1.82) circle (.1cm);
\draw[fill] (-0.81,1.82) circle (.1cm);
\draw[fill] (-0.81,-1.82) circle (.1cm);\draw[fill] (0.81,-1.82) circle (.1cm);
\draw[fill] (1.82,0.81) circle (.1cm);\draw[fill] (1.82,-0.81) circle (.1cm);
\draw[fill] (-1.82,-0.81) circle (.1cm);
\draw[fill] (-1.82,0.81) circle (.1cm);
\node at (-1,1){$p^+$};
\node at (1,-1){$p^-$};
\end{tikzpicture}
$$

It is then easy to see that, as in the proof of Theorem \ref{Th}, the term in factor of $x_i$ corresponding to the partition $\pi$ is the product of the terms in factor of $x_i$ and $x_{i+1}$ in $S_\sigma$ and $S_{s_i\sigma s_i}$, respectively, corresponding to the partitions $\pi^-$ and $\pi^+$. 
\end{proof}


\begin{thebibliography}{}
\bibitem[BJ]{BJ}
 Denis Bernard, Tony Jin. Solution to the Quantum Symmetric Exclusion Process: the continuous case.  Commun. Math. Phys. 384 (2021), 1141--1185 

\bibitem[FR]{FR}
Sergey Fomin, Nathan Reading. Root systems and generalized associahedra. {\sl Geometric combinatorics}, 63--131, IAS/Park City Math. Ser., 13, Amer. Math. Soc., Providence, RI, 2007.
\bibitem[CN]{CN}
Fr\'ed\'eric Chapoton, Philippe Nadeau. Combinatorics of the categories of noncrossing partitions.
{\sl S\'eminaire Lotharingien de Combinatoire} {\bf 78B} (2017) Article $\sharp 37$.

\bibitem[JMNT]{JMNT}
 Matthieu Josuat-Verg\`es,  Fr\'ed\'eric Menous, Jean-Christophe Novelli, Jean-Yves Thibon.  Free cumulants, Schr\"oder trees, and operads. Adv. in Appl. Math. 88 (2017), 92--119. 
\bibitem[M]{M}
Kirone Mallick. The Exclusion Process: A paradigm for non-equilibrium behaviour. Physica A: Statistical Mechanics and its Applications, Elsevier,  418 (2015), 17--48.

\bibitem[MS]{MS}
 James A. Mingo, Roland  Speicher. Free probability and random matrices. Fields Institute Monographs, 35. Springer, New York; Fields Institute for Research in Mathematical Sciences, Toronto, ON, 2017. 
\bibitem[NS]{NS} Alexandru Nica, Roland  Speicher.  Lectures on the combinatorics of free probability. London Mathematical Society Lecture Note Series, 335. Cambridge University Press, Cambridge, 2006.
\bibitem[OEIS]{OEIS} The On-Line Encyclopedia of Integer Sequences, {\tt https://oeis.org/}
\end{thebibliography}
\end{document}